\renewcommand{\leq}{\leqslant}
\renewcommand{\geq}{\geqslant}
\newcommand{\ceil}[1]{\ensuremath{\lceil{} #1 \rceil{}}}
\newtheorem{theorem}{Theorem}[section]
\newtheorem{lemma}[theorem]{Lemma}
\newtheorem{clm}{Claim}
\theoremstyle{definition}
\newtheorem{definition}[theorem]{Definition}
\author{Matthew Wales}
\date{\today{}}
\address{DPMMS, University of Cambridge, CB3 0WB, UK}
\title{Bipartite clique minors in graphs of large Hadwiger number}
\begin{document}

\begin{abstract}
    The Hadwiger number $h(G)$ is the order of the largest complete minor in $G$. Does sufficient Hadwiger number imply a minor with additional properties?
    In \cite{GEELEN200920}, Geelen et al showed $h(G)\geq (1+o(1))ct\sqrt{\ln t}$ implies $G$ has a bipartite subgraph with Hadwiger number at least $t$, for some explicit $c\sim 1.276\dotsc$. We improve this to $h(G) \geq (1+o(1))t\sqrt{\log_2 t}$, and provide a construction showing this is tight. We also derive improved bounds for the topological minor variant of this problem.
\end{abstract}

\maketitle

\section{Introduction}

A well known result shows that every graph with average degree at least $2d$ contains a bipartite subgraph of average degree at least $d$, and further this result is essentially tight (for example, for complete graphs). Can we prove a similar result for other graph parameters? In a recent preprint, Hickingbotham and Wood \cite{HickWood} showed a number of such results, While some of their results were known, it serves as a helpful survey of the literature.

One particular choice of parameter is the Hadwiger number\footnote{A graph $H$ is a minor of $G$ if it can be obtained by a sequence of edge contractions, or vertex and edge deletions. The Hadwiger number $h(G)$ is the largest $t$ for which $K_t$ is a minor of $G$}. Geelen, Gerards, Reed, Seymour and Vetta \cite{GEELEN200920} showed that $h(G)\geq c_1t\sqrt{\ln t}$ implies $G$ contains a bipartite subgraph with Hadwiger number at least $t$ by combining results of Thomason \cite{ThomasonComplete} and a slight modification of the classical average degree result. The constant $c_1$ can be taken to be $1.276\dotsc + o(1)$, corresponding to twice the extremal function for $K_t$ minors.

Hickingbotham and Wood rederived this theorem, and explicitly posed the question of whether this bound is asymptotically tight. In this note, we show that the asymptotic growth rate is correct, however the constant is different. 

\begin{theorem}
\label{T:lb}
    For all $\epsilon > 0$, there is a $t_0$ such that for all $t>t_0$, if \\\mbox{$f(t) = \ceil{(1-\epsilon)t\sqrt{\log_2 t}}$}, there is a graph with a $K_{f(t)}$ minor, and in fact a $K_{f(t)}$ topological minor, which has no bipartite subgraph with Hadwiger number at least $t$.
\end{theorem}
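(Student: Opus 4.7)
The plan is to construct $G$ as a random subdivision of $K_n$ with $n = f(t)$. For each edge $e \in E(K_n)$, independently with probability $1/2$, keep $e$ as an edge; otherwise subdivide it once (replacing it by a length-$2$ path through a fresh internal vertex). Let $\phi(e) \in \{0, 1\}$ denote the parity of the resulting path length. By construction $G$ is a subdivision of $K_n$, so it has $K_n = K_{f(t)}$ as a topological minor, with the $n$ original vertices as branch vertices.

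To control bipartite subgraphs, fix any $2$-colouring $c \colon V(G) \to \{0, 1\}$. The $u$--$v$ path corresponding to $e = uv \in E(K_n)$ is fully retained in the bipartite subgraph iff it is properly $2$-coloured, which is possible iff $\phi(e) \equiv c(u) + c(v) \pmod{2}$; when this fails, at most one of the two edges on a subdivided path can survive, giving a pendant attached to $u$ or $v$. Setting
\[
H_c \ = \ \bigl\{e \in E(K_n) : \phi(e) = c(u) \oplus c(v)\bigr\},
\]
the bipartite subgraph is thus a subdivision of $H_c$ with some pendants attached, and hence has the same Hadwiger number as $H_c$ (suppressing degree-$2$ vertices and deleting pendants both preserve $h$).

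It remains to choose $\phi$ so that $h(H_c) < t$ simultaneously for every $c$. For any fixed $c$, each edge of $K_n$ independently lies in $H_c$ with probability $1/2$, so $H_c$ is distributed as the binomial random graph $G(n, 1/2)$. By the classical Bollob\'as--Catlin--Erd\H{o}s asymptotic (sharpened by Fountoulakis--K\"uhn--Osthus), $h(G(n, 1/2)) = (1 + o(1)) n / \sqrt{\log_2 n}$ with high probability; substituting $n = \lceil (1 - \epsilon) t \sqrt{\log_2 t} \rceil$ gives $h(H_c) \leq (1 + o(1))(1 - \epsilon) t < t$ for $t$ sufficiently large.

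The main obstacle is combining this estimate over all $2^n$ colourings simultaneously. One needs a tail bound of the form $\Pr[h(G(n, 1/2)) \geq t] \leq 2^{-n}$ so that a union bound over $c$ still leaves a valid $\phi$. For the singleton (i.e.\ $K_t$-subgraph) contribution this is immediate from $\binom{n}{t} 2^{-\binom{t}{2}} \ll 2^{-n}$ in our range of parameters; extending to minors with larger connected branch sets will require a careful first-moment enumeration over partition types of total size at most $n$. Alternatively, one may derandomise by choosing $\phi$ to be the edge-indicator of an explicit quasirandom graph of density $1/2$ (for example a Paley graph on $n$ vertices), so that every $H_c$ is pseudorandom of density $1/2$ and its Hadwiger number is controlled deterministically by a Thomason-type bound for pseudorandom graphs.
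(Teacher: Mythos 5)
Your construction and reduction match the paper's almost exactly: both randomly subdivide each edge of $K_n$ with probability $\tfrac12$, observe that each bipartition $c$ of the branch vertices forces a unique maximal bipartite subgraph whose ``kept'' paths form a copy of $G(n,\tfrac12)$, and aim for a union bound over the $2^n$ bipartitions. The gap you identify at the end is, however, a genuine gap in your write-up: the high-probability bound $h(G(n,\tfrac12)) = (1+o(1))n/\sqrt{\log_2 n}$ on its own is nowhere near strong enough, since the failure probability it gives is far weaker than $2^{-n}$, and you neither state nor prove the needed tail estimate. The paper closes exactly this gap by invoking the Bollob\'as--Catlin--Erd\H{o}s bound that any fixed partition of $V(G(n,\tfrac12))$ into $s$ parts is compatible with probability at most $\exp(-\binom{s}{2}2^{-n^2/s^2})$, then union-bounding over the at most $n^n$ partitions into $s$ parts \emph{and} the $2^n$ bipartitions $c$; with $s = n/\sqrt{\log_2 n - 3\log_2\log_2 n}$ the exponent $\binom{s}{2}2^{-n^2/s^2}$ is of order $n(\log_2 n)^2$, which comfortably beats the $\exp(O(n\log n))$ union-bound cost. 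So your ``careful first-moment enumeration over partition types'' is precisely the missing step, and you should carry it out --- note that it is much cleaner than a first-moment count over genuine minor models, because one can drop the connectedness requirement on branch sets and enumerate arbitrary set partitions, at the acceptable cost of an $n^n$ factor. Your alternative suggestion of derandomising via a Paley graph is plausible in spirit but would require a deterministic Hadwiger-number upper bound for \emph{every} graph of the form $H_c$ (each of which is a different ``XOR-shift'' of the Paley graph); you have not provided such a bound, and it is not an off-the-shelf consequence of quasirandomness, so this route is also incomplete as stated.

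Two smaller remarks. First, you should say a word about why degree-two suppressions and pendant deletions preserve the Hadwiger number for $t\geq 3$; the paper handles this by noting that one must contract or delete all internal path vertices. Second, your formula $\phi(e)=c(u)\oplus c(v)$ is correct as written (with $\phi$ the parity of the path length), but it is worth double-checking the convention, since the paper uses the complementary convention of colouring the path Red when odd and Blue when even.
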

We remark that our lower bound is asymptotically $(1.201\dotsc +o_t(1))t\sqrt{\ln t})$.

\begin{theorem}
\label{T:ub}
    For any $\epsilon > 0$ there is a $t_0(\epsilon)$ such that for any $t > t_0$, any graph with Hadwiger number at least $t\sqrt{\log_2 t}$ contains a bipartite subgraph with Hadwiger number at least $(1-2\epsilon)t$. 
\end{theorem}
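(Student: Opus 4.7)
The plan is to model the proof on the Bollob\'as--Catlin--Erd\H{o}s lower bound $h(G(n,1/2)) = (1+o(1))n/\sqrt{\log_2 n}$ for the Hadwiger number of the random graph: set $h := \ceil{t\sqrt{\log_2 t}}$, take a $K_h$-minor in $G$ with branch sets $B_1,\dots,B_h$, and apply a random bipartition that turns the contracted structure on $h$ vertices into something behaving like $G(h,1/2)$.

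Concretely, I pick a spanning tree $T_i$ in each $B_i$ with its canonical $2$-coloring $T_i = T_i^+ \sqcup T_i^-$, and build a bipartition $(A,B)$ of $V(G)$ as follows: independently for each $i$, flip a fair coin $x_i \in \{0,1\}$ and put $T_i^+$ into $A$ if $x_i = 0$, otherwise put $T_i^-$ into $A$. The key feature is that every $B_i$ is automatically connected in the bipartite subgraph $G^*$ (the spanning subgraph of $G$ consisting of the edges crossing $(A,B)$), because all edges of $T_i$ cross.

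Next I partition $[h]$ into $t$ groups $\mc{I}_1,\dots,\mc{I}_t$ of size $s := \ceil{(1+\epsilon)\sqrt{\log_2 t}}$. Inside each group I use $s-1$ inter-branch edges of the $K_h$-minor (chosen to form a tree on $\mc{I}_k$) to stitch the branch sets into a single super-branch-set $S_k := \bigcup_{i \in \mc{I}_k} B_i$; the resulting $s-1$ linear constraints on $\{x_i\}_{i \in \mc{I}_k}$ are consistent, and after enforcing them each $S_k$ is connected in $G^*$, leaving one free random bit $X_k$ per group. Between two groups, at least $s^2 = (1+\epsilon)^2 \log_2 t$ potential inter-branch edges each contribute a linear constraint of the form $X_k \oplus X_l = z_e$ on these free bits; the pair is adjacent in $G^*$ provided at least one such constraint is satisfied. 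A union bound patterned on Bollob\'as--Catlin--Erd\H{o}s should then cut the expected number of bad pairs to $o(t)$, so removing a vertex cover of them (losing $o(t)$ super-branch-sets) leaves $(1-2\epsilon)t$ pairwise-adjacent connected super-branch-sets in $G^*$, a $K_{(1-2\epsilon)t}$-minor.

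The hard part will be the probability estimate for each pair of groups. In the generic case the failure probability is $\sim 2^{-s^2} = t^{-(1+\epsilon)^2} = o(1/t)$, exactly what is needed. The trouble comes from degenerate configurations---notably when the branch sets are tiny, so that the $z_e$ values for a pair of groups all coincide---where the probability degrades to $\sim 2^{-2s}$ and the union bound fails. I expect to resolve this either by exploiting multi-edges between branch-set pairs where available, by a more refined choice of the spanning trees $T_i$ to diversify the $z_e$'s, or by splitting off the case when many $B_i$ are singletons: in that case $G$ contains a $K_{\Omega(h)}$ outright, which has bipartite Hadwiger number $\Omega(h) \gg t$, and no probabilistic argument is needed. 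Handling these degenerate cases is the principal technical challenge.
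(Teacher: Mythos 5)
There is a genuine gap in the probability estimate, and it is not confined to degenerate cases. Between super-groups $\mc{I}_k$ and $\mc{I}_l$, every one of your $s^2$ constraints has the form $X_k \oplus X_l = z_e$ where $z_e\in\{0,1\}$ is a constant determined by $G$ and the spanning trees. The only randomness that bears on the pair $(k,l)$ is the \emph{single} bit $X_k\oplus X_l$, so these constraints are not independent: the pair fails to be adjacent in $G^*$ exactly when $X_k\oplus X_l\notin\{z_e : e\}$, which has probability $0$ if both values $0,1$ occur among the $z_e$, and probability exactly $\tfrac12$ if they all coincide. It is \emph{never} $2^{-s^2}$; the "generic" estimate is impossible, since a non-trivial event depending on one fair bit has probability at least $\tfrac12$. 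Moreover, nothing in the hypotheses prevents an adversarial $G$ in which the $z_e$ coincide for most pairs of super-groups, in which case the expected number of bad pairs is on the order of $\binom{t}{2}/2$, and deleting a vertex cover does not salvage a $K_{(1-2\epsilon)t}$ minor. The proposed remedies (multi-edges, refined tree choices, peeling off singleton branch sets) do not restore a multiplicative gain, because once $X_k\oplus X_l$ is the only source of randomness for a pair, no choice of the constants $z_e$ can push a nonzero failure probability below $\tfrac12$.

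The paper avoids this by asking the random bipartition only to produce \emph{density}, not a minor. Its random-partition step (Theorem~\ref{T:rbhalf}) shows the auxiliary $2$-coloured $K_n$ has an RB-bipartite subgraph $H'$ on at least half the edges --- which is precisely what your $G^*$ construction recovers, rephrased. The decisive next step is deterministic: Thomason's theorem that any $n$-vertex graph of density at least $\tfrac12$ has a $K_m$-compatible-partition (pairwise touching parts, connectivity not required) with $m\approx n/\sqrt{\log_2 n}$. Connectivity is then repaired using a reserved set of $\epsilon n$ vertices acting as a logarithmic "projector" and a "connector", all while carefully maintaining RB-bipartiteness so the result remains a bipartite minor. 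To close your gap you would essentially need a substitute for Thomason's compatible-partition theorem working directly inside your random construction, and that deterministic structural guarantee is the substantial missing piece.
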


Hickingbotham and Wood \cite{HickWood} also considered the natural analogue for topological minors\footnote{A graph $H$ is a topological minor of $G$ if $G$ contains as a subgraph a graph obtained from $H$ by replacing each edge with a path, all such paths being internally vertex disjoint}. If $tcl(G)$ is the order of the largest topological clique minor contained in $G$, combining the high average degree bipartite subgraph result and lower bounds on $tcl(G)$ in terms of average degree, they showed that there is a constant $c$ (in particular, one can take $c = 20/23 + o(1)$) such that for sufficiently large $t$, every graph with $tcl(G)\geq ct^2$ has a bipartite subgraph with $tcl(H)\geq t$. We were able to improve their upper bound, and also provide an example showing quadratic growth is necessary.

\begin{theorem}
    \label{T:topub}
    If $G$ is a graph with topological clique number at least $(\frac12 + o_t(1))t^2$, then $G$ has a bipartite subgraph $H$ with $tcl(H)\geq t$
\end{theorem}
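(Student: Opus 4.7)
The plan is to take a $K_s$-subdivision in $G$ with $s=(\tfrac12+o(1))t^2$ and build a $K_t$-subdivision inside a bipartite subgraph of $G$ by designating $t$ of its branch vertices as the target branches $u_1,\ldots,u_t$ of the $K_t$-subdivision and using the remaining $s-t\geq\binom{t}{2}$ branch vertices as a pool of potential relays. Write $P_{pq}$ for the internally disjoint subdivision paths and $\sigma_{pq}$ for their length parities. Each pair $(i,j)$ will be routed either directly along $P_{u_iu_j}$ or through a relay $b$ along $P_{u_ib}\cup P_{bu_j}$, with distinct relays across pairs so the routing paths remain internally disjoint (any shared original path $P_{ab}$ is ruled out by distinctness of the endpoint-pair labels).

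For this $K_t$-subdivision to lie in a bipartite subgraph, I $2$-colour $V(G)$ by choosing $c(u_i)=f(i)$ and then colouring each chosen relay $b$ and the internal vertices of each chosen path to alternate along that path; since every path in the original subdivision is a path, alternation is always achievable, so the bipartite condition reduces to the length parity of each chosen path matching $f(i)\oplus f(j)$. Thus the direct option works iff $\sigma_{u_iu_j}=f(i)\oplus f(j)$, and the relay option via $b$ works iff $\sigma_{u_ib}\oplus\sigma_{bu_j}=f(i)\oplus f(j)$. I now need to choose $f$, the labelling of branches, and the relay assignment so that every pair has a usable option.

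My plan is a two-step argument. First, pick $f$ to maximize the MaxCut of the signed graph on $[t]$ with edge labels $\sigma_{u_iu_j}$; this guarantees at least $\binom{t}{2}/2$ directly-usable pairs. Second, for each of the at most $\binom{t}{2}/2$ remaining pairs, I try to find a distinct relay $b_{ij}$ satisfying $\sigma_{u_ib_{ij}}\oplus\sigma_{b_{ij}u_j}=f(i)\oplus f(j)$. For any fixed pair, pigeonhole on the pool shows at least $\lfloor(s-t)/2\rfloor$ of the $s-t$ candidate relays achieve each of the two parities, and a Hall's-theorem / double-counting argument on the bipartite graph of (pair, valid relay) incidences should then produce a system of distinct representatives. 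Any residual failures can be repaired by relabelling a few branches or swapping them into the relay pool, absorbing the inefficiency into the slack $s-\binom{t+1}{2}=o(t^2)$ provided by the hypothesis.

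The main obstacle is the tightness of the constant $\tfrac12$: the relay pool is only marginally larger than $\binom{t}{2}$, so Hall's condition has almost no slack and must be verified on the nose by controlling the parity patterns $\sigma_{u_ib}\oplus\sigma_{bu_j}$ uniformly over the pool. The delicate interplay between MaxCut of the direct-path labels and the parity profile of the relay paths is where the $o(1)$ term in the hypothesis gets consumed, and I expect verifying it will be the technical heart of the argument.
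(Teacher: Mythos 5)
The central gap is your ``pigeonhole on the pool'' claim, which is false. For a fixed pair $(i,j)$ the quantity $\sigma_{u_ib}\oplus\sigma_{bu_j}$ is simply some bit determined by $b$, and there is no structural reason it should split roughly evenly over the pool. Pigeonhole only tells you that \emph{one} of the two parities is achieved by at least half the relays, not that \emph{each} is; an adversary can arrange that every pool vertex is joined to $u_i$ and $u_j$ by even-length subdivision paths, in which case every relay gives parity $0$ and a pair with $f(i)\oplus f(j)=1$ has \emph{no} valid length-$2$ relay at all. Since MaxCut of the direct parities $\sigma_{u_iu_j}$ is chosen independently of the relay parities, this bad case is not excluded, and your Hall graph can have isolated pair-vertices. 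The ``repair by relabelling a few branches'' remark does not address this: the failure can be systematic rather than marginal.

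What is missing is exactly the two devices the paper uses at this point. First, it allows relay paths of length $3$, not just $2$: when all length-$2$ relays give the wrong parity, a single red (odd-parity) edge \emph{inside the pool} yields a length-$3$ path of the right parity. Second, and crucially, it has an escape: if even that fails, i.e.\ the relevant $t$-set of pool vertices is monochromatic in the ``wrong'' way, then that set is itself a monochromatic (blue) clique on $t$ vertices, which is automatically an RB-bipartite $TK_t$ and finishes the proof outright. In other words, relay failure is not an obstruction but a win condition. Structurally the paper also proceeds incrementally (adding one branch vertex at a time to a growing $TK_s$, keeping half the edges to existing branches via the subgraph choice of $A$/$B$ and relaying the other half), which keeps the bookkeeping local and avoids a global system-of-distinct-representatives argument entirely. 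Your global MaxCut-plus-Hall framing is a genuinely different route, but as written it cannot close without importing the length-$3$ relays and the monochromatic-clique escape.
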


\begin{theorem}
\label{T:toplb}
    There is a constant $C$ such that for all $t$, there is a graph $G$ with $tcl(G) \geq Ct^2$ but $G$ has no bipartite subgraph $H$ with $tcl(H)\geq t$.
\end{theorem}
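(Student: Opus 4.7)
The plan is to take $G$ itself to be a complete graph $K_n$ with $n$ just below $t(t+2)/4$. Since $K_n$ trivially contains a topological $K_n$, we have $tcl(G) = n = \Theta(t^2)$, so $tcl(G) \geq Ct^2$ holds for any $C < 1/4$ (for all $t \geq 2$, by choosing $n$ appropriately). All the real content is in showing that no bipartite subgraph $H$ of this $G$ has $tcl(H) \geq t$.

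Any bipartite subgraph $H \subseteq K_n$ is contained in some complete bipartite graph $K_{a,b}$ with $a+b \leq n$, so it suffices to bound $tcl(K_{a,b})$ in terms of $a+b$. My plan for this is a direct counting argument. Suppose $K_{a,b}$ contains a subdivision of $K_t$, with $t_A$ branch vertices in the part of size $a$ and $t_B = t - t_A$ in the part of size $b$. Because the host graph is bipartite, any path in the subdivision between two branch vertices on the same side has length at least two and therefore passes through an internal vertex on the opposite side; by internal disjointness these vertices are distinct, and they avoid all branch vertices. Counting yields
\[
b \geq t_B + \binom{t_A}{2}, \qquad a \geq t_A + \binom{t_B}{2},
\]
and adding, together with convexity of $\binom{\cdot}{2}$ (so that $\binom{t_A}{2} + \binom{t_B}{2} \geq t(t-2)/4$), gives $a+b \geq t + t(t-2)/4 = t(t+2)/4$.

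Thus if we pick $n$ to be the largest integer strictly less than $t(t+2)/4$, every bipartite subgraph of $K_n$ sits in some $K_{a,b}$ with $a+b \leq n < t(t+2)/4$ and therefore has topological clique number at most $t-1$, while $tcl(K_n) = n \geq t^2/4 - O(1)$. I do not foresee any significant obstacle: everything reduces to the $K_{a,b}$ counting lemma sketched above, which is elementary. The one conceptual point is that a bipartite subgraph of a complete graph is much more constrained than an arbitrary bipartite graph, so the right quantity to control is $tcl(K_{a,b})$ rather than $tcl$ of a general graph, and the former grows only like $\sqrt{a+b}$.
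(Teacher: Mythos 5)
Your proposal is correct and follows essentially the same approach as the paper: take $G = K_n$ with $n$ just under $t(t+2)/4$, embed any bipartite subgraph into a complete bipartite host $K_{a,b}$, and count internal vertices needed for paths between same-side branch vertices, with the convexity of $\binom{\cdot}{2}$ giving the bound $a+b \geq t(t+2)/4$. The paper's calculation and final choice $K_{\lceil t^2/4\rceil}$ are the same argument, so the two proofs coincide in substance.
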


Our proof takes $C =\frac14$, and the example is a complete graph. It seems reasonable to conjecture that $C = \frac14$ is tight (and indeed that cliques are extremal); our proof of $\frac12$ for an upper bound only requires this many vertices in a very structured case, which seems unlikely to occur. An existing upper bound of the shape $ct^2$ is twice the extremal function for topological minors. With the best known upper bound on this function (see \cite{KO_bestUB}), this gives a value $c = 20/23 + o(1)$. If it were possible to improve this upper bound to the best known lower bound, we would obtain $c = 9/32 + o(1)$ (this lower bound is from bipartite random graphs, and due to \L{}uczak) - there remains only a small gap between this and our lower bound. This suggests the lower bound is likely to be hard to improve, since a substantial improvement would also improve bounds on the extremal function.

\section{\mbox{RB-bipartite} graphs}
In this section, we introduce \mbox{RB-bipartite} graphs. These will serve as a general framework for proving the bounds. 

\begin{definition}
Let $H$ be a graph equipped with a 2-edge-colouring (with colours Red and Blue). Then $H$ is \textit{\mbox{RB-bipartite}} if every cycle uses an even number of red edges (call such cycles R-even). Equivalently, $H$ has no R-odd cycle.
\end{definition}

We note that this definition is not symmetric under interchanging colours: for instance, a $K_3$ coloured red is not \mbox{RB-bipartite}, but a blue coloured $K_3$ is. It is easily seen being \mbox{RB-bipartite} is equivalent to being bipartite if every blue edge is subdivided exactly once. The following lemma gives some other equivalent definitions.

\begin{lemma}
\label{T:equivdefsrb}
The following are equivalent.
\begin{enumerate}
    \item $H$ is $RB$-bipartite
    \item $H$ has no circuit using an odd number of red edges (no R-odd circuit)
    \item There is a partition $(X,Y)$ of $V(H)$ such that all edges between $X$ and $Y$ are red, and all edges within $X$ or within $Y$ are blue
\end{enumerate}
\end{lemma}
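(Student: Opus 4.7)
The plan is to prove the equivalences in a cycle of implications $(3) \Rightarrow (2) \Rightarrow (1) \Rightarrow (3)$, where the first two are essentially immediate and the last uses the subdivision correspondence already flagged in the paragraph preceding the lemma.

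For $(3) \Rightarrow (2)$, I would observe that any closed walk in $H$ must cross the cut $(X,Y)$ an even number of times (counted with multiplicity). Under the partition described in $(3)$, an edge is red if and only if it crosses the cut, so the number of red edges used by the walk is even. Since a circuit is in particular a closed walk, this gives the required parity statement. The implication $(2) \Rightarrow (1)$ is trivial, since every cycle is a circuit.

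The substantive implication is $(1) \Rightarrow (3)$. Here I would use the observation, noted in the paragraph before the lemma statement, that $H$ is RB-bipartite precisely when the graph $H'$ obtained from $H$ by subdividing every blue edge exactly once is bipartite: a cycle of $H$ using $r$ red edges and $b$ blue edges corresponds to a cycle of $H'$ of length $r+2b$, which is even iff $r$ is even. Given a proper $2$-colouring of $H'$, one restricts to $V(H) \subseteq V(H')$ to obtain a partition $(X,Y)$ of $V(H)$. A red edge of $H$ remains an edge of $H'$, and so its endpoints receive different colours and lie across the partition. A blue edge of $H$ becomes a path of length $2$ in $H'$ through a subdivision vertex, so its two endpoints in $V(H)$ receive the same colour and lie inside $X$ or inside $Y$. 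This is exactly the structure demanded by $(3)$.

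No step looks technically hard; the only potential subtlety is that the word \emph{circuit} is ambiguous in the literature. If one takes a circuit to mean a closed trail, then $(2)$ is formally stronger than $(1)$, and my cycle $(3) \Rightarrow (2) \Rightarrow (1) \Rightarrow (3)$ still recovers it without additional work. If on the other hand one wanted a direct argument $(1) \Rightarrow (2)$, one would use Veblen's theorem to decompose a closed trail into edge-disjoint cycles and sum the red-edge counts modulo $2$; but going around the cycle of implications avoids this altogether.
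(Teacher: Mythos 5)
Your proof is correct, but it takes a genuinely different route from the paper's. For the substantive implication $(1)\Rightarrow(3)$, the paper works directly with the red subgraph $H_R$: it shows each red-connected component is forced to have a unique RB-bipartition (else an R-odd cycle), then merges these component bipartitions across blue edges, checking consistency one component at a time (again via R-odd cycles), and finally iterates over connected components of $H$. Your approach instead reduces the whole problem to the classical bipartite characterization via the subdivision map $H\mapsto H'$: subdividing every blue edge once sets up a parity-preserving bijection between cycles of $H$ and cycles of $H'$, so RB-bipartiteness of $H$ is literally bipartiteness of $H'$, and restricting a proper $2$-colouring of $H'$ to $V(H)$ hands you the partition in $(3)$ (red edges cross, blue edges stay inside because the subdivision vertex forces the two endpoints to the same colour class). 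Your route is shorter and arguably cleaner, since it outsources all the case analysis to the standard odd-cycle characterization of bipartite graphs; the paper's direct argument is more self-contained and avoids introducing the auxiliary graph $H'$, at the cost of a somewhat fiddly induction over red components. Your remark on $(1)\Rightarrow(2)$ via Veblen's decomposition is also correct but, as you note, unnecessary once you argue in a cycle of implications; the paper likewise treats $(3)\Rightarrow(2)\Rightarrow(1)$ as immediate.
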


\begin{proof}
It is easily seen that $(3)\implies (2) \implies (1)$, and so it remains to prove that \mbox{RB-bipartite} graphs have a partition as in (3).
  Let $H_R$ denote the graph on the red edges, and $H_B$ the graph on blue edges. It is easily seen that $H_R$ is a bipartite graph. We will work on each component of $H_R$ in turn. Let $C_1$ be such a component, with bipartition $X_1,Y_1$. If there is a blue edge between $X_1$ and $Y_1$, then also taking a path in $H_R$ from between the endpoints we get a cycle using an odd number of red edges, contradiction. So the induced subgraph of $H$ on $C_1$ is \mbox{RB-bipartite}.
  
  Let $C_2$ be a different component of the red subgraph, with bipartition $X_2,Y_2$. Suppose there is a blue edge from $X_1$ to $X_2$, and also $X_1$ to $Y_2$. Then taking a path between the endpoints in $C_2$, as well as in $C_1$ (note this path may be empty), we get a cycle using an odd number of red edges. This contradicts that $H$ is \mbox{RB-bipartite}, and so we can (uniquely, if there is an edge between $C_1$ and $C_2$) extend the RB-bipartition to their union. 
  
  We consider each connected component of $H$ in turn, and note that such connected components are unions of red-connected components - suppose we have $C = \bigcup_{i=1}^r C_i$. Relabelling if necessary, we can assume that $H[\cup_{i\leq s}C_i]$ is connected for all $s$. Apply the above argument first to $C_1$ and $C_2$. Consider now $C_1\cup C_2$ (with the uniquely extended RB-bipartition) and $C_3$ =. The argument for components actually only used connectedness, and so we can apply it to this setting also to get a unique extension of the bipartitions. We now repeat with $\cup_{i\leq 3}C_i$ and $C_4$, and so on. This gives an RB-bipartition of $C$. We can therefore apply this in turn to each component, and the result follows.
\end{proof}

\begin{theorem}
\label{T:rbhalf}
    Let $H$ be a 2-coloured graph. Then $H$ has an \mbox{RB-bipartite} subgraph with at least $\frac12e(G)$ edges.
\end{theorem}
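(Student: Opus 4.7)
The plan is to use the characterisation given by part (3) of Lemma \ref{T:equivdefsrb}: an RB-bipartite subgraph of $H$ corresponds to a partition $(X,Y)$ of $V(H)$, where the subgraph consists of precisely the red edges between $X$ and $Y$ together with the blue edges lying inside $X$ or inside $Y$. So it suffices to exhibit a partition $(X,Y)$ of $V(H)$ for which the total number of such ``consistent'' edges is at least $\frac12 e(H)$.

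For this I would use a standard first-moment / random partition argument. Place each vertex of $H$ independently into $X$ with probability $\frac12$ and into $Y$ otherwise. Fix any edge $e=uv$ of $H$: regardless of its colour, the events \{$u,v$ on opposite sides\} and \{$u,v$ on the same side\} each occur with probability $\frac12$. Thus $e$ is ``consistent'' with the partition, meaning red and crossing or blue and non-crossing, with probability exactly $\frac12$. By linearity of expectation the expected number of consistent edges is $\frac12 e(H)$, so some partition $(X,Y)$ achieves at least this value; the RB-bipartite subgraph induced by that partition (via Lemma \ref{T:equivdefsrb}(3)) has the required number of edges.

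There is no real obstacle here: the argument is a one-line probabilistic computation, and the only slightly substantive point is recognising that Lemma \ref{T:equivdefsrb}(3) lets us describe any RB-bipartite \emph{subgraph} of $H$ as being selected by a bipartition of the whole vertex set, after which the probabilistic bound is immediate. If desired, one can derandomise by a greedy/conditional-expectation argument placing vertices one at a time, but for the statement as given this is unnecessary.
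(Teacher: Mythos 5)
Your proof is correct and follows essentially the same route as the paper: a uniformly random bipartition of $V(H)$, a linearity-of-expectation computation showing each edge is consistent with probability $\frac12$, and then invoking Lemma~\ref{T:equivdefsrb}(3) to identify the resulting edge set as an RB-bipartite subgraph. The paper phrases the expectation in terms of the auxiliary quantity $d(X,Y)=e_R(X,Y)-e_B(X,Y)$ but the calculation is the same; your per-edge formulation is arguably a touch cleaner.
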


\begin{proof}
    Let $X,Y$ be a partition of $V(H)$. Define $e_R(X)$ to be the number of Red edges with both ends inside X, $e_R(X,Y)$ the number of red edges with one endpoint in $X$ and one endpoint in~$Y$.
    
    Suppose that we place vertices in $X$ or $Y$ independently at random with probability~$\frac12$. Let $d(X,Y) = e_R(X,Y) - e_B(X,Y)$. Then since each edge is between $X$ and $Y$ with probability~$\frac12$, we have $\mathbb{E}(d(X,Y)) = \frac12 (e_R(H) - e_B(H))$. Pick some choice of $X,Y$ that attains at least this expectation. Construct the subgraph $H'$ consisting of all blue edges inside $X$ or $Y$, and all red edges from $X$ to $Y$. Then by the lemma, $H'$ is \mbox{RB-bipartite}, and further \begin{align*}e(H') = (e_B(H) - e_B(X,Y)) + e_R(X,Y) \geq \frac12 (e_R(H) + e_B(H)) = \frac12 e(H)\end{align*}
    as desired.
\end{proof}

These graphs are introduced to simplify later proofs by relating minors to coloured graphs. For topological minors, if we take $G$ to be a subdivided $K_t$, and form a 2-coloured graph $H$ by replacing each path with a Red edge if it has odd length, and Blue if it has even length, a subgraph of $G$ consisting of a union of paths is bipartite if and only if the corresponding graph in $H$ is \mbox{RB-bipartite} (it is easily seen a cycle in $G$ has odd length if and only if the corresponding cycle of $H$ is R-odd). For general minors, things are more complicated, though in the following restricted setting we can obtain a result sufficient for our purposes.

\begin{definition}
\label{D:auxgraph}
Let $G$ be a graph, with a partition into $n$ parts $V_i$, such that all $G[V_i]$ are trees, and there is one edge from $V_i$ to $V_j$. Let $v_i\in V_i$ be an arbitrary choice of roots. The \textit{canonical path} from $v_i$ to $v_j$, where there is an edge from $V_i$ to $V_j$, is the unique $v_i v_j$ path in $G[V_i\cup V_j]$ 

The \textit{auxiliary graph} $H[G]$ has vertex set $[n]$, and an edge from $i$ to $j$ whenever there an edge between $V_i$ to $V_j$. This edge is coloured Red if the canonical $v_iv_j$ path has odd length, and Blue if it has even length.
\end{definition}

We remark that strictly this definition of $H[G]$ also depends on the choice of roots, however we suppress this notation as the choice does not matter.

\begin{lemma}
\label{T:bipiffrb}
Let $G, H[G]$ be as in Definition~\ref{D:auxgraph}, and let $H'$ be some (coloured) subgraph of $H$. Let $G'$ be a subgraph of $G$ containing all edges within a part $V_i$, together with the edge from $V_i$ to $V_j$ if and only if $ij\in E(H')$. Then $G'$ is bipartite if and only if $H'$ is \mbox{RB-bipartite}
\end{lemma}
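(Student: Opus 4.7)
I will use the equivalent characterisation (3) of \mbox{RB-bipartite} from Lemma~\ref{T:equivdefsrb}: $H'$ is \mbox{RB-bipartite} iff $[n]$ admits a partition $(X,Y)$ such that every red edge of $H'$ goes between $X$ and $Y$ while every blue edge stays inside a part. The proof is then a parity calculation that identifies a 2-colouring of $V(G')$ with such a partition of $[n]$, via the roots $v_i$.

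The key parity observation is this. For each edge $ij \in E(H')$ we have a corresponding bridge edge $u_iu_j$ with $u_i\in V_i$, $u_j\in V_j$ in $G'$, and the canonical path has length
\begin{equation*}
\ell_{ij} = d_{V_i}(v_i,u_i) + 1 + d_{V_j}(v_j,u_j).
\end{equation*}
Since $G[V_i]$ is a tree, for any $u\in V_i$ the distance $d_{V_i}(v_i,u) \pmod 2$ is determined by any proper $2$-colouring of $G[V_i]$ (adjacent vertices always have opposite parities). So the parity of $\ell_{ij}$ equals the sum of the ``root parities'' $\phi(v_i) + \phi(v_j)$ plus an extra $1$ modulo $2$, whenever $\phi$ is a proper $2$-colouring compatible on the bridge $u_iu_j$.

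\textbf{Forward direction.} If $G'$ is bipartite with $2$-colouring $\phi$, define $X=\{i : \phi(v_i)=0\}$ and $Y=\{i : \phi(v_i)=1\}$. For any edge $ij\in E(H')$, since $u_iu_j\in E(G')$ we have $\phi(u_i)\neq \phi(u_j)$; combining with $\phi(v_i)\equiv \phi(u_i)+d_{V_i}(v_i,u_i) \pmod 2$ (and similarly for $j$), we get $\phi(v_i)+\phi(v_j)\equiv \ell_{ij} \pmod 2$. Thus $ij$ is red (odd canonical path) iff $v_i,v_j$ lie on opposite sides, giving the required partition.

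\textbf{Reverse direction.} Given the partition $(X,Y)$ from Lemma~\ref{T:equivdefsrb} applied to $H'$, define
\begin{equation*}
\phi(u) = \mathbf{1}[i\in Y] + d_{V_i}(v_i,u) \pmod 2 \quad \text{for } u\in V_i.
\end{equation*}
Edges of $G[V_i]$ join vertices of opposite tree-distance parities from $v_i$, so $\phi$ is proper within each part. For a bridge $u_iu_j$ corresponding to an edge $ij\in E(H')$, the parity $\phi(u_i)+\phi(u_j)$ equals $\mathbf{1}[i\in Y]+\mathbf{1}[j\in Y]+\ell_{ij}-1 \pmod 2$. If $ij$ is red then $\ell_{ij}$ is odd and $i,j$ lie on different sides, giving parity $1+0 \equiv 1$; if $ij$ is blue then $\ell_{ij}$ is even and $i,j$ lie on the same side, again giving $0+1\equiv 1$. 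Either way $\phi(u_i)\neq \phi(u_j)$, so $\phi$ properly $2$-colours $G'$.

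\textbf{Main obstacle.} There is no conceptual difficulty; the only thing to be careful about is bookkeeping the three contributions to $\ell_{ij}$ and matching them cleanly against the two-part partition of $[n]$. The root-dependence of the colouring of $H[G]$ drops out in the argument (changing a root $v_i$ to $v_i'$ shifts all incident canonical path parities by $d(v_i,v_i')\pmod 2$, which is exactly the operation of swapping $i$ between $X$ and $Y$), which is why the choice of roots is immaterial.
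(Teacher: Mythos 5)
Your proof is correct, and it takes a genuinely different route from the paper's. The paper proves both directions by translating cycles: given an R-odd cycle in $H'$ it constructs an odd closed walk in $G'$ by concatenating canonical paths, and conversely it takes an odd cycle in $G'$, traces the sequence of parts $V_i$ it visits, and carefully splices in paths to the roots $v_i$ to realize it as a union of canonical paths, whose image in $H'$ is an R-odd circuit. Your approach instead invokes characterisation~(3) of Lemma~\ref{T:equivdefsrb} and works with explicit $2$-colourings rather than cycles: you observe the exact parity identity $\ell_{ij} = d_{V_i}(v_i,u_i) + 1 + d_{V_j}(v_j,u_j)$ for the canonical path through the bridge $u_iu_j$, and push a proper $2$-colouring $\phi$ of $G'$ down to a partition of the roots (and hence of $H'$) in one direction, and lift an RB-bipartition of $H'$ up to a $2$-colouring $\phi(u) = \mathbf{1}[i\in Y] + d_{V_i}(v_i,u)$ in the other. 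This is arguably cleaner: it avoids the paper's bookkeeping with path segments $Q_i$, $P_i'$ and circuit repetitions, and it makes the remark about root-independence transparent (changing a root toggles parities in exactly the way a side-swap in the bipartition does). The paper's cycle argument is self-contained and does not need to invoke the structural characterisation, which some might view as a modest advantage.

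One small point of precision: Lemma~\ref{T:equivdefsrb}(3) gives a partition of $V(H')$, not of $[n]$. If $V(H')\subsetneq [n]$, the parts $V_i$ with $i\notin V(H')$ have no bridge edges in $G'$, so you may place such $i$ in either side of $(X,Y)$ arbitrarily; your formula for $\phi$ then applies verbatim. This is a triviality but worth saying to match the statement of the lemma you are citing.
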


\begin{proof}
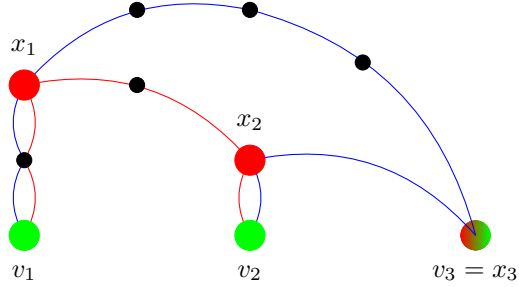
\begin{figure}
    \centering
\begin{tikzpicture}

    \shade[left color = red, right color = green] (6,0) circle (0.2cm);
    \draw[blue] (0,0) to[bend left] (0,1) to [bend left] (0,2);
    \draw[red] (0,0) to [bend right] (0,1) to [bend right] (0,2);
    \draw[red] (3,0) to[bend left] (3,1);

    \draw[blue] (3,0) to [bend right] (3,1);
    \draw[red] (0,2) to [bend left] (3,1);
    \draw[blue] (3,1) to [bend left] (6,0);
    \draw[blue] (0,2) to [bend left] (3,3) to [bend left] (6,0);
 
    \filldraw[green] (0,0) circle (0.2cm);
    \filldraw[green] (3,0) circle (0.2cm);
    \filldraw (0,1) circle (0.1cm);
    \filldraw[red] (0,2) circle (0.2cm);
    \filldraw[red] (3,1) circle (0.2cm);
    \filldraw (1.5,2) circle (0.1cm);
    \filldraw (1.5,3) circle (0.1cm);
    
    \filldraw (3,3) circle (0.1cm);
    \filldraw (4.5,2.3) circle (0.1cm);
      \node at (0,-0.5) {$v_1$};
    \node at (3,-0.5) {$v_2$};
    \node at (6,-0.5) {$v_3 = x_3$};
    \node at (0,2.5) {$x_1$};
    \node at (3,1.5) {$x_2$};
 
\end{tikzpicture}
    \caption{This figure illustrates how an odd cycle (or circuit) in the auxiliary graph necessarily produces an odd cycle in the main graph. The red path is the unique odd edge, the blue paths are even. The length of the sub-cycle of this odd circuit is also odd.}
    \label{fig:bipcycle}
\end{figure}

We start by proving the `only if' statement. By relabelling, we can assume we have a cycle $1,\dotsc,k$ in $H'$, where an odd number of edges $i,i+1$ are Red (and indices are taken modulo $k$). We aim to construct an odd cycle (or indeed circuit) in $G'$.

We build a circuit $C$ in $G$ as follows. Start by traversing the canonical path from $v_1$ to $v_2$ (and call this path $P_1$), then continue to $v_3$ along canonical path $P_2$, and so on. $C$ clearly has odd length, since it has an odd number of odd length segments, and so $G'$ cannot be bipartite (see Figure 1 for an illustration).

The forwards direction is similar; suppose that $G'$ is not bipartite, and consider an odd cycle $C$ in $G'$. Fix some starting point and orientation of the cycle. Relabelling, suppose that we pass through vertex classes $V_1,\dotsc,V_k$ (where we may have repetition) in this order - since there is only one edge between each pair of $V_i$, and each class is a tree, we have $k\geq 3$ and $V_{i-1},V_i,V_{i+1}$ are all distinct. Further, each $i$ is adjacent to $i+1$ in $H'$. Let $Q_i$ be the segment of $C$ inside $V_i$ (between $V_{i-1}$ and $V_{i+1}$, and inclusive of the edge to $V_{i-1}$). 

Since $V_i$ is connected, there is some path $P'_i$ inside $V_i$ from $v_i$ to this path segment which meets the path in exactly one vertex; call its endpoint $x_i$. Then following $P'_i$ from $v_i$ to $x_i$, then traversing $Q_i$ and $Q_{i+1}$ until we reach $x_{i+1}$, and finally traversing $P'_{i+1}$ to $v_{i+1}$ must be the canonical path from $v_i$ to $v_{i+1}$ (note that we never repeat a vertex, and hence have a path which is necessarily unique)

    This produces an odd circuit, since each new edge we add is traversed twice. But this circuit is a union of canonical paths, an odd number of which must have odd length. In particular, the corresponding circuit $1\dotsc k$ in $H'$ has an odd number of Red edges, and therefore $H'$ is not \mbox{RB-bipartite}.
\end{proof}

\section{Proof of theorem \ref{T:lb}}
\begin{proof}

Let $H$ be a graph on vertex set $[n]$. Consider the graph $G(H)$ consisting of $n$ special `branch vertices' $v_1,\dotsc,v_n$, and for each $i<j$ a path between $v_i$ and $v_j$ of length either 1 (i.e. an edge between them) if $ij$ is an edge of $H$, or 2 if not. Equivalently, start with a copy of $K_n$, and subdivide each edge $v_iv_j$ once when $ij$ is not an edge of $H$. We note the auxiliary graph $H[G]$ is a copy of $K_n$, with $H$ coloured Red and $H^c$ coloured blue.

Clearly, this graph $G(H)$ has Hadwiger number exactly $n$ for any $H$ (since we must always contract or delete all the internal vertices of induced paths for $t\geq 3$). We will let $H\sim G(n,\frac12)$ an Erd\H{o}s-Renyi random graph.

    What can we say about bipartite subgraphs of $G$? The subgraph of $G$ consisting of all paths where $ij$ is not an edge consists of some branch vertices, and some internal vertices. This is immediately seen to form a bipartite subgraph (or equivalently, this holds by Lemma ~\ref{T:bipiffrb})

    Applying this lemma, the maximal bipartite subgraphs of $G$ are formed by taking some bipartition $(X,X^c)$ of $[n]$, and taking all length 1 paths from $X$ to $X^c$, together with all length 2 paths contained in either $X$ or $X^c$.  We assume our graph is one of the at most $2^n$ graphs formed in this way.
    
    Fixing $X$, and sampling $H$ from $G(n,\frac12)$, each edge within or disjoint from $X$ has length 2 with probability $\frac12$, and each edge from $X$ to $X^c$ has length 1 with probability $\frac12$. This means that all maximal bipartite subgraphs are distributed like (subdivisions of) $G(n,\frac12)$. The subdivision does not affect the Hadwiger number. In particular, if we can show the probability $G(n,\frac12)$ has a $K_t$ minor is below $2^{-n}$, there is some choice of $H$ for which there is no bipartite $K_t$ minor.
    
    Bollob\'as, Catlin and Erd\H{o}s \cite{BollobasCatlinErdos} showed that the probability some (fixed) partition of $G = G(n,\frac12)$ into $s$ parts is `compatible' - i.e. there is an edge of $G$ between each pair of parts - is at most $\exp(-\binom{s}22^{-n^2/s^2})$. Note that the parts of a $K_s$ model\footnote{A collection $(V_h)_{h\in H}$ of subsets (called parts) of $V(G)$ is a model of $H$ in $G$ if each $V_h$ is connected, and when $h\sim h'$ in $H$, there is an edge of $G$ between $V_h$ and $V_{h'}$. $H$ is a minor of $G$ if and only if there is a $H$ model in $G$.} form a compatible partition. The number of choices of partition into $s<n$ parts is at most $n^n$. Let $s = n / \sqrt{\log_2 n - 3\log_2\log_2 n}$. 
    
    Then it is easily seen $2^n n^n \exp(-\binom{s}2 2^{-n^2/s^2}) = o(1)$, and in particular the probability $G(n,\frac12)$ has a $K_s$ minor is less than $2^{-n}$ for large $s$. This means there is some particular choice of $H$ for which no bipartite subgraph of $G(H)$ has Hadwiger number at least \mbox{$s = (1+o(1))n/\sqrt{\log_2n}\leq t$}.
\end{proof}

\section{Proof of Theorem~\ref{T:ub}}
\begin{proof}
Let $n = \ceil{t\sqrt{\log_2 t}}$, and let $G$ be a graph with $h(G)\geq n$. For the rest of this proof we will neglect rounding, since this can be absorbed into the value of $\epsilon$.

We delete as many edges as possible from $G$ while retaining a $K_{n}$ minor, and so we can assume that $G$ consists of $n$ disjoint subsets $V_1,\dotsc,V_n$, with each $G[V_i]$ a tree, as well as exactly one edge between each pair $V_i$ and $V_j$ ($i\neq j$).

Recall from Definition~\ref{D:auxgraph} the canonical path and auxiliary graph $H[G]$; we will use these notions here. We start by reserving  a set $S\subset V(H)$ of $\epsilon n$ vertices for later use, and apply our theorems with $G' = G[\cup_{i\notin S}V_i]$. Note that $H[G']$ is a 2-coloured $K_{(1-\epsilon)n}$.

By Theorem~\ref{T:rbhalf}, $H[G']$ has an \mbox{RB-bipartite} subgraph $H'$ (with \mbox{RB-bipartition} $(A,B)$ ) on at least $\frac12 \binom{(1-\epsilon)n}{2}$ edges. By Theorem~\ref{T:bipiffrb}, the corresponding subgraph $G''$ of $G'$ (consisting of all edges inside $G'[V_i]$, as well as the edge from $V_i$ to $V_j$ whenever $ij$ is an edge of $H'$) is bipartite, and this also holds for any proper subgraph of $H'$. Note also that a minor in $H'$ directly corresponds to a minor in $G''$ (by first contracting each $V_i$ to a vertex), and hence to a minor in $G'$ which is bipartite.

\begin{definition}

Given a graph $F$, a collection of subsets $(U_v)_{v\in V(K_m)}$ is called a \textit{$K_m$-compatible-partition} if it forms a $K_m$ minor without the connectedness condition, i.e. the subsets are pairwise disjoint, and for each pair $vw$, there is an edge in $F$ between $U_v$ and $U_w$.
\end{definition}

The following result of Thomason \cite{ThomasonComplete} shows that $H'$ contains a $K_m$-compatible-partition for $m = (1- 3\epsilon/2))n/\sqrt{\log_2 n} = (1-3\epsilon/2 )t$, provided $t$ (and hence $n$) is large by taking a subgraph of $H'$ of density between $\frac12$ and $\frac34$.

\begin{theorem}[\cite{ThomasonComplete}]
    Let $\epsilon > 0$. Then there is an $n_0(\epsilon)$ such that for all \\\mbox{$(\log \log n)^{2+\epsilon} < p <1 - (\log n)^{-1/\epsilon}$}, and all graphs $F$ with $n>n_0$ vertices and density $p$, $F$ has a $K_m$-compatible-partition for some $m\geq (1-\epsilon)n/\sqrt{\log_{1/1-p}n}$
\end{theorem}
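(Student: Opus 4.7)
The plan is a random-then-deletion argument. Set $m_0 = (1-\epsilon/3) n /\sqrt{\log_{1/(1-p)} n}$ and $k = n/m_0$, and consider a uniformly random equipartition of $V(F)$ into parts $U_1, \ldots, U_{m_0}$ of size approximately $k$. The aim is to show that the expected number of pairs $(i,j)$ with no edge of $F$ between $U_i$ and $U_j$ is much smaller than $m_0$; fixing a partition achieving this, one deletes one part from each bad pair to produce the required $K_m$-compatible-partition of size $m \geq (1-\epsilon)n/\sqrt{\log_{1/(1-p)}n}$.

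The core step is to bound, for a fixed pair of parts, the probability $q$ that no $F$-edge joins them. Writing $L = \log(1/(1-p))$, one aims for $q \leq (1-p)^{k^2(1-o(1))} = \exp(-Lk^2(1-o(1)))$. With our choice of $m_0$, $k^2 = \log n / (L(1-\epsilon/3)^2)$, so this gives $q \leq n^{-1/(1-\epsilon/3)^2 + o(1)}$. Since $1/(1-\epsilon/3)^2 > 1$, this is $n^{-1-\delta(\epsilon)}$ and hence $\mathbb{E}[\text{bad pairs}] \leq m_0^2 q / 2 = o(m_0)$. Fixing an outcome meeting the expectation, the auxiliary ``bad-pair'' graph on $[m_0]$ has $o(m_0)$ edges, so admits a vertex cover of size at most $(\epsilon/3) m_0$; deleting those parts leaves at least $(1-\epsilon) n /\sqrt{\log_{1/(1-p)} n}$ parts, pairwise joined by an edge of $F$, as required (absorbing rounding into $\epsilon$).

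The main obstacle is the probability estimate $q \leq (1-p)^{k^2(1-o(1))}$ for arbitrary $F$ of density $p$. The naive bound obtained by counting complete bipartite non-edge subgraphs is too weak, and worst-case structures (e.g.\ blow-ups of sparse graphs) can inflate $q$ well above the target. Obtaining the tight exponent requires either a shadow / Kruskal--Katona-type inequality applied to the non-edge set of $F$, or a reduction to a ``regular'' subgraph via iterative pruning of low-degree vertices, so that a random partition in the pruned subgraph behaves like one in $G(n,p)$ up to negligible error. The density window on $p$ stated in the theorem is precisely the range in which such comparisons cost only a $1+o(1)$ factor in the exponent: too small a $p$ forces $k$ so large that pseudorandomness fails, and $p$ too close to $1$ means $L$ is dominated by its error terms. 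Inside the stipulated range, both corrections stay in the $o(1)$ of the exponent, which is what is needed for the final $(1-\epsilon)$ constant.
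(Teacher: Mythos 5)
This statement is quoted in the paper directly from \cite{ThomasonComplete} and is not proved there, so the comparison is with Thomason's argument; your sketch does not reconstruct it, and the gap you flag in your own third paragraph is in fact fatal to the argument as written. The estimate $q\leq (1-p)^{k^2(1-o(1))}$ for a fixed pair of parts of a uniformly random equipartition is simply false for general $F$ of density $p$: take $p=\tfrac12$ and let $F$ be a clique on $n/\sqrt2$ vertices together with isolated vertices. With $k\approx\sqrt{\log_2 n}$, the probability that both parts of a fixed pair fall inside the isolated set is at least $(1-1/\sqrt2)^{2k}=\exp(-c\sqrt{\log n})$, which dwarfs the target $n^{-1-\delta}$; the expected number of bad pairs is then far larger than $m_0$, and the delete-one-part-per-bad-pair step removes everything. (The theorem is still true for this $F$ --- the clique alone gives a compatible partition of linear order --- but your randomized construction does not find it.) So the first-moment calculation only works for pseudorandom $F$, which is exactly why the analogous computation of Bollob\'as--Catlin--Erd\H{o}s applies to $G(n,\frac12)$ but not to arbitrary dense graphs.

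The remedies you gesture at are not carried out, and at least the ``prune to a denser subgraph'' route is genuinely delicate rather than routine: if you pass to a subgraph on $n'<n$ vertices with density $p'>p$, the quantity you must beat changes from $n/\sqrt{\log_{1/(1-p)}n}$ to $n'/\sqrt{\log_{1/(1-p')}n'}$, and there is no automatic monotonicity guaranteeing the trade of vertices for density is favourable; controlling exactly this trade-off (via a structural dichotomy between graphs that behave quasirandomly for the random-partition count and graphs containing substantially denser pieces, together with a carefully chosen potential/extremal function) is the main technical content of \cite{ThomasonComplete}, not an $o(1)$ bookkeeping issue absorbed by the stated range of $p$. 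A Kruskal--Katona bound on the non-edge set likewise does not by itself yield the exponent $k^2L(1-o(1))$ uniformly over the pair of random parts. As it stands, your proposal reduces the theorem to its hardest step and leaves that step unproved.
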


If in addition $H'$ had `reasonably high' connectivity, further results of that paper \cite{ThomasonComplete} would now imply that we can extend this to a minor, with the proof implicitly using (in the language of \cite{ThomasonWales}) two subsets $P$ (the projector) to project subsets to logarithmically smaller sets, and a $C$ (the connector) to connect small sets. However, we do not know that $H'$ has suitable connectivity, and so instead use the reserved set $S$ to serve these roles.

Suppose our $K_m$ compatible partition in $H'$ consists of the pairwise adjacent disjoint subsets $U_1,\dotsc,U_m$ of $V(H')$.

We start by performing the role of the projector. Let us first restrict attention to $U_1$, and suppose we have $U_A = U_1 \cap A$, $U_B = U_1\cap B$ (recalling $(A,B)$ is some fixed \mbox{RB-bipartition} of $H'$). Let $i$ be some element of $S\setminus V(H')$. We want to add $i$ to the subgraph $H'$ in such a way we preserve \mbox{RB-bipartiteness}. Adding $i$ to $A$, we can add all Red edges from $i$ to $B$, and all Blue edges from $i$ to $A$; adding $i$ instead to $B$ reverses the colours. 
In particular, one of these choices must allow us to add edges from $i$ to at least half of $U$. Make such a choice, and add $i$ and all such edges to $H'$. Replace $U$ with the set of vertices in $U$ not adjacent in $H'$ to $i$, and iterate this process. In this way, we can construct a set $P(U_1)$ (and add its vertex set to $H'$) of size at most $p(U_1) = (1 + \log_2 |U_1|)$, add some extra edges to $H'$ such that every element of $U_1$ has a neighbour in $P(U_1)$, and extend the \mbox{RB-bipartition} $(A,B)$ of $H'$ to these new vertices and edges. 

We can construct such sets in turn for each of $U_1,\dotsc,U_m$ using at most \\\mbox{$\sum p(U_i) \leq t + t\log(n/t)$} elements of $S$ overall, assuming $|S| > 2t + t\log(n/t)$. Since this holds for our value of $n$, we can obtain an \mbox{RB-bipartite} extension $H''$  of $H'$, which also contains disjoint new sets $P(U_i)$ of size at most $p(U_i)$, where each vertex of $U_i$ has a neighbour (in $H'$) inside $P(U_i)$. Let $S' = S \setminus \cup_i P(U_i)$, and note $|S'|\geq \epsilon n / 2$.

\begin{lemma}
Let $H'$ be an \mbox{RB-bipartite} graph, $x,y\in V(H')$, and let $T\subset S'$ have size $t$, where $S'$ is disjoint from $H'$. Suppose that every vertex of $T$ is joined to every vertex of $T\cup H'$ by either a red or a blue edge.
Then either $T$ forms an \mbox{RB-bipartite} $K_t$, or there is some set of at most 2 vertices from $T$ which can be added to $H'$ to form the internal vertices of an $xy$ path while preserving \mbox{RB-bipartiteness}.
\end{lemma}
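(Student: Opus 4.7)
The plan is to use Lemma~\ref{T:equivdefsrb} to fix an RB-bipartition $(A,B)$ of $H'$, and WLOG to assume $x\in A$. For each $z\in T$ I would record the two ``forced sides'': $\sigma_x(z)\in\{A,B\}$, the side on which $z$ would have to be placed to be consistent with only the edge $xz$ (so $\sigma_x(z)=A$ iff $xz$ is blue), and $\sigma_y(z)$, defined analogously from the edge $yz$ and the side of $y$. These two functions record exactly the information needed to decide whether a candidate length-$2$ or length-$3$ path can be appended while preserving RB-bipartiteness.

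First I would try to build a length-$2$ path $x z y$ with one internal vertex $z\in T$. Since only the edges $xz,zy$ are being added, by Lemma~\ref{T:equivdefsrb} this preserves RB-bipartiteness iff a single side can be chosen for $z$ compatible with both edges, i.e.\ iff $\sigma_x(z)=\sigma_y(z)$. If some $z\in T$ has this property, we are done using a single vertex of $T$. Otherwise $\sigma_x(z)\neq\sigma_y(z)$ for every $z\in T$, which is the hypothesis I carry forward.

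Next I would try length-$3$ paths $x z_1 z_2 y$ with $z_1,z_2\in T$. The placement of $z_1$ is forced to $\sigma_x(z_1)$ (by $xz_1$) and the placement of $z_2$ to $\sigma_y(z_2)$ (by $z_2y$), and the already-present edge $z_1z_2$ is then forced to be blue when $\sigma_x(z_1)=\sigma_y(z_2)$ and red otherwise. If some ordered pair $(z_1,z_2)$ satisfies this compatibility, we have the desired two-vertex solution.

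The crux is the remaining case, in which no length-$2$ or length-$3$ path works; I would argue that then $T$ itself is RB-bipartite. Consider the partition $(\sigma_x^{-1}(A),\sigma_x^{-1}(B))$ of $T$. For $z_1,z_2$ in the same class, the pointwise inequality $\sigma_x\neq\sigma_y$ gives $\sigma_y(z_2)\neq\sigma_x(z_1)$, so the length-$3$ failure forces the edge $z_1z_2$ to be blue; for $z_1,z_2$ in opposite classes the same inequality gives $\sigma_y(z_2)=\sigma_x(z_1)$, forcing $z_1z_2$ red. This is exactly condition (3) of Lemma~\ref{T:equivdefsrb}, so $T$ is RB-bipartite, yielding the first alternative. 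The proof reduces to a short, elementary case analysis; the only mild obstacle is keeping track of which side each candidate internal vertex is forced into, which is exactly what the notation $\sigma_x,\sigma_y$ is designed to automate.
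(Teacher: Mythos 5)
Your proof is correct and follows essentially the same line of reasoning as the paper's: try a length-2 internal vertex, then a length-3 pair, and if both always fail conclude that the colour constraints on $T$ are precisely those of condition (3) of Lemma~\ref{T:equivdefsrb}. The only difference is presentational: the paper runs two parallel case analyses depending on whether $x$ and $y$ lie on the same or opposite sides of the RB-bipartition, whereas your functions $\sigma_x,\sigma_y$ absorb that distinction and handle both cases uniformly, which is a modest but genuine streamlining.
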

\begin{proof}
Let $H'$ have \mbox{RB-bipartition} (A,B).
Suppose first that $x,y\in A$ (this corresponds to joining $v_i$ and $v_j$ by a path using an even number of Red edges). If some vertex is joined to both $x$ and $y$ by edges of the same colour, we can create a length 2 path from $x$ to $y$, adding the new vertex to $A$ or $B$ depending on parity.

So we can assume this does not happen. Let $X$ be the set of vertices joined to $x$ by a red edge, and $Y$ those vertices joined to $y$ by a red edge; note this forms a partition of $T$. If some pair of vertices $v,w$ in $X$ are joined by a red edge, then $xvwy$ is an even length path from $x$ to $y$; we can extend our bipartition by adding $v$ to $A$ and $w$ to $B$. The same argument works if $v,w\in Y$. 

If $v\in X, w\in Y$, and $vw$ is blue, then we likewise get a length three path. So we can assume that these cases never happen. But then $(X,Y)$ forms an \mbox{RB-bipartition} of $T$. But since the graph on $T$ is complete, we directly have an \mbox{RB-bipartite} $K_t$ subgraph.

 If instead $x\in A, y\in B$ the argument is similar. If some vertex is joined to $x$ and $y$ by different colours, we obtain a path of length 2 from $x$ to $y$ which is R-odd. Therefore, we can assume the set $R$ of vertices joined to $x$ and $y$ by red edges, and $B$ for blue edges form a partition of $T$. As above, and illustrated in Figure 2, if there is a red edge within either $R$ or $B$, or a blue edge between $R$ and $B$, we get an R-odd path from $x$ to $y$ of length at most 3. So we can assume this doesn't happen, and therefore the partition $(R,B)$ shows that $T$ is directly an \mbox{RB-bipartite} $K_t$.
 \end{proof}
 
This shows in particular that if $|S'|\geq t + 2l$, we can sequentially connect any $l$ pairs in a disjoint fashion while preserving \mbox{RB-bipartiteness}.

We will now use Lemma 4.2 and the set $S'$ to sequentially connect pairs from $H''$ and hence obtain a minor; we can assume that we never obtain an \mbox{RB-bipartite} $K_t$ subgraph in the lemma as otherwise we are directly done. We will first label $P(U_1)$ as $x_1,\dotsc,x_{|P(U_1)|}$. 

Since $\sum p(U_i) + t \leq |S'|/2$, we can use two new elements of $S$ to join $x_1$ to $x_2$, then $x_2$ to $x_3$ and so on while remaining \mbox{RB-bipartite}. We can then repeat this for $U_2$ and so on. Suppose that the internal vertices of the paths used to connect $P(U_i)$ are $C(U_i)$. We now replace $U_i$ with $V_i = U_i \cup P(U_i) \cup C(U_i)$, and extend the subgraph $H''$ [in a \mbox{RB-bipartite} fashion] to include $\cup C(U_i)$, with the edges along the paths also added. $V_i$ forms a connected subset containing $U_i$, and so the $V_i$ form an \mbox{RB-bipartite} $K_m$ minor. This corresponds to a bipartite $K_m$ minor in $G$.
\end{proof}

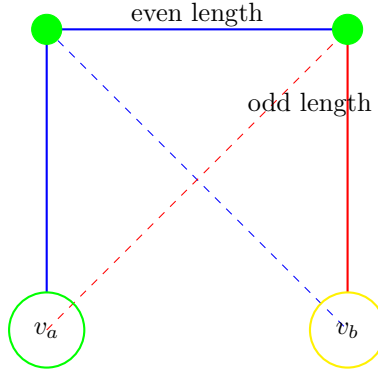
\begin{figure}
    \centering
    \begin{tikzpicture}

\draw[blue,thick] (0,0.5) -- (0,4);
\draw[red,thick] (4,0.5) -- (4,4);
\draw[blue,thick] (0,4) -- (4,4);
\draw[blue,dashed] (0,4) -- (4,0);
\draw[red,dashed] (0,0) -- (4,4);

\draw[green,thick] (0,0) circle (0.5cm);
\node at (0,0) {$v_a$};
\draw[yellow,thick] (4,0) circle (0.5cm);
\node at (4,0) {$v_b$};
\filldraw[green] (0,4) circle (0.2cm);
\filldraw[green] (4,4) circle (0.2cm);
\node at (2,4.2) {even length};
\node at (3.5,3) {odd length};

\end{tikzpicture}
    \caption{The green and yellow nodes represent $A$ and $B$ respectively. The thick lines show how to add the edge while remaining bipartite.}
    \label{fig:bipaddedge}
\end{figure}
\section{Results for topological minors}

\begin{proof}[Proof of Theorem~\ref{T:topub}]

We reduce the theorem to the following claim.
This corresponds to topological minors in $G$ where the branch vertices are preserved
    \begin{clm}
    Let $G$ be a 2-coloured copy of $K_{\binom{t+3}{2}} = K_{t^2/2 + O(t)}$. Then $H$ contains an \mbox{RB-bipartite} topological $K_t$ minor.
    \end{clm}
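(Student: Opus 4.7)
The plan is to choose $t$ branch vertices in $G$, apply Theorem~\ref{T:rbhalf} to equip them with an RB-bipartition, and then realise each of the $\binom{t}{2}$ required inter-branch paths as either the direct edge (when its colour matches the parity prescribed by the bipartition) or a length-$2$ path through a dedicated mediator vertex drawn from the remaining vertices of $G$.

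In more detail: pick branches $u_1,\dots,u_t$ and let $(A_1, A_2)$ be the RB-bipartition of $\{u_1,\dots,u_t\}$ produced by Theorem~\ref{T:rbhalf} applied to the induced 2-coloured $K_t$, with corresponding signs $\epsilon_i \in \{\pm 1\}$. That theorem ensures at most $\frac{1}{2}\binom{t}{2}$ of the direct edges between branches are ``wrong''---have colour disagreeing with $\epsilon_i \epsilon_j$. For each correct pair, use the direct edge as the path. For each wrong pair $(u_i,u_j)$, route a length-$2$ path $u_i-w-u_j$ through an unused mediator $w \in V(G)\setminus\{u_1,\dots,u_t\}$ whose two incident edges $u_iw, wu_j$ combine to give parity $\epsilon_i\epsilon_j$, with distinct wrong pairs receiving distinct mediators. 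A path of length $1$ or $2$ through an unused mediator preserves the RB-bipartition extension in the spirit of Lemma~\ref{T:bipiffrb}.

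The counting is favourable: at most $\frac{1}{4}t^2 + O(t)$ wrong pairs compete for $\frac{1}{2}t^2 + O(t)$ candidate mediators. The \emph{main obstacle} is to show this simultaneous mediator assignment always exists; the bipartite ``fixability'' graph between wrong pairs and admissible mediators need not satisfy Hall's condition for every 2-colouring (for instance, if the colour patterns at $u_i$ and $u_j$ towards $V(G)\setminus\{u_i,u_j\}$ are near negations, very few $w$ can fix a given Blue-parity wrong pair). To absorb such pathological pairs, I would allow length-$3$ fallback paths through two mediators: the $2t+3$ vertices that $\binom{t+3}{2}$ offers beyond the baseline $\binom{t+1}{2}$ (branches plus one mediator per wrong pair) give exactly enough slack to accommodate these detours. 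Alternatively, the branch selection could be refined via a greedy or dependent-random-choice step to guarantee each wrong pair admits many valid length-$2$ mediators. Either way, the combinatorial heart of the proof lies in this routing / Hall-type step, and the precise value $\binom{t+3}{2}$ should be tuned to just close the argument.
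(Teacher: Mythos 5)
Your high-level plan (pick branches, choose an RB-bipartition, repair the ``wrong'' pairs with subdivided paths) is in the right spirit, but you have correctly identified -- and then left unresolved -- the load-bearing step. The proposal as written is not a proof: you acknowledge the simultaneous mediator assignment ``need not satisfy Hall's condition,'' and neither of the two rescue ideas you float (length-$3$ fallbacks with extra vertices, or a cleverer branch selection) is carried out. The aggregate count ($\sim t^2/4$ wrong pairs vs.\ $\sim t^2/2$ spare vertices) is not enough: an adversarial colouring can make almost every spare vertex useless for a particular wrong pair, exactly as you fear, and vertex-count slack alone does not repair a violated Hall condition.

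The paper avoids the simultaneous-matching problem entirely by building the subdivided $K_s$ \emph{one branch at a time} and, crucially, by using a ``win--win'' dichotomy at each connection attempt. When a new branch $w$ is added, an averaging argument (the same one underlying Theorem~\ref{T:rbhalf}, applied locally to the edges at $w$) lets $w$ be placed on a side of the bipartition so that direct edges already give the correct parity to at least half the existing branches; only $\leq s/2$ pairs remain. For each such pair $(w,v_i)$, the unused vertex set $S$ is split by the colour of the edge to $v_i$ into $S_1,S_2$. Either some vertex of $S_1\cup S_2$ gives a parity-correct length-$2$ path, or a length-$3$ path through two unused vertices works, \emph{or} one of $S_1,S_2$ has size $\geq t$ and forms a blue clique -- which is itself an RB-bipartite $TK_t$ and finishes the proof outright. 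This ``connect or directly extract a blue $K_t$'' escape hatch is precisely what replaces the Hall-type argument you were missing; it removes the need to reason globally about all wrong pairs at once, and it is why the sequential construction closes while the one-shot routing does not. If you want to salvage your one-shot plan, you would need to build this dichotomy in: show that any wrong pair with few admissible mediators forces a large monochromatic blue set among the spares.
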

    
    We will build up a bipartite $K_s$ model one vertex at a time; the base case \\\mbox{$s=1$} being trivial. We impose the additional constraint that our $TK_s$ (topological $K_s$) uses at most $1 + \binom{s+1}{2}$ vertices to help with our inductive step.
    
    Suppose that $V = \{v_1,\dotsc,v_s\}$ are the branch vertices of our $TK_s$, and that $S$ is the set of vertices not appearing in the $TK_s$. Pick some vertex $w$ of $S$ at random. Let $(A,B)$ be a bipartition of the vertices of the $TK_s$. 
By a result used in the proof for minors, adding $w$ to either $A$ or $B$ allows us to retain at least half of the edges from $w$; pick some choice where we can do so. It remains to build subdivided edges from $w$ to the remaining at most $s/2$ vertices.

Pick some vertex $v_i$ we do not yet have a path to. Suppose that we are trying to build a R-odd path to $v_i$ from $w$. Let $S_1$ be those vertices in $S$ red-joined to $v_i$, and $S_2$ those vertices which are blue-joined.

If some vertex of $S_1$ is blue-joined to $v_i$, we can take this path of length 2 and extend our model. The same argument works if a vertex of $S_2$ is blue-joined to $v_i$. So we can assume this never happens.

If $S_1$ has size at least $t$, but contains no red edges, we directly have a blue (and hence $\mbox{RB-bipartite}$) $TK_t$. So we can assume there is some red edge $xy$ in $S_1$ in this case. But then the path $wxyv_1$ has 3 red edges, and so is R-odd.

If instead $S_2$ has size at least $t$, we can also use two internal vertices to extend our minor, or otherwise directly obtain a bipartite $TK_t$. If we want to join by an R-even path, we swap the colours of all edges incident to $w$, apply the above argument, and then swap back. 
In particular, we can use at most $1+ 2(s/2) = s+1$ additional vertices to turn our bipartite $K_s$ topological minor into an \mbox{RB-bipartite} $TK_{s+1}$, provided at least $2t+s+1$ vertices remain. Since the $TK_s$ had at most $1+ \binom{s}{2}$ vertices, this new topological minor has at most $1+ \binom{s+1}{2}$ vertices. In particular, taking $n > 2t + 1 +\binom{t+1}{2}$, $H$ has an \mbox{RB-bipartite} $TK_t$.

\end{proof}

\begin{proof}[Proof of Theorem ~\ref{T:toplb}]
Our lower bound relies on complete bipartite graphs being good examples of graphs with high average degree, and no topological $K_t$ minor.

Let $G$ be a complete bipartite graph on parts $A,B$, and suppose that there is a topological $K_t$ minor, say with $s$ branch vertices in $A$ and $(t-s)$ branch vertices in $B$. Then for each pair of vertices (in $A$, say), we need a distinct vertex in $B$ to lie on the path between them. This means we need $|B|\geq (t-s) + \binom{s}{2}$, and so overall $|G| \geq t + \binom{s}{2} + \binom{t-s}{2} \geq t + 2\binom{t/2}{2} = t^2/4 + t/2$ by convexity.

If we take $G$ to be a complete graph $K_{\ceil{t^2/4}}$, no bipartite subgraph can have a topological $K_t$ minor (as there are too few vertices). The result follows as $tcl(K_n) = n$.

\end{proof}

\subsection*{Acknowledgements}
The author was supported by an EPSRC DTP Studentship. The author would also like to thank Andrew Thomason for some helpful discussions.

\bibliography{biblio.bib}
\bibliographystyle{plain}

\end{document}